\newtheorem{theorem}{Theorem}
\newtheorem{lemma}[theorem]{Lemma}
\theoremstyle{definition}
\theoremstyle{theorem}\newtheorem{proposition}[theorem]{Proposition}
\theoremstyle{definition}
\theoremstyle{definition}\newtheorem*{remarks}{Remarks}
\theoremstyle{definition}
\theoremstyle{definition}
\newcommand{\al}{\alpha}
\newcommand{\ga}{\gamma}
\newcommand{\Ga}{\Gamma}
\newcommand{\Del}{\Delta}
\newcommand{\Lam}{\Lambda}
\newcommand{\vphi}{\varphi}
\newcommand{\vre}{\varepsilon}
    \newcommand{\Xn}{{\mathcal{L}_n}}
\newcommand{\sm}{\smallsetminus}
\newcommand{\df}{{\, \stackrel{\mathrm{def}}{=}\, }}
\newcommand\Name[1]{\label{#1}{\ifdraft{\sn
      [#1]}\else\ignorespaces\fi}}
\newcommand\eq[2]{{\ifdraft{\ \tt
      [#1]}\else\ignorespaces\fi}\begin{equation}\label{#1}{#2}\end{equation}} 
\newcommand {\equ}[1]{\eqref{#1}}
\newcommand{\cS}{\mathcal{S}}
\newcommand{\cV}{\mathcal{V}}
\newcommand{\bR}{\mathbb{R}}
\newcommand{\bZ}{\mathbb{Z}}
\newcommand{\R}{{\mathbb{R}}}
\newcommand{\Z}{{\mathbb{Z}}}
\newcommand {\ignore}[1]  {}
\newcommand{\SL}{\operatorname{SL}}
\newcommand{\defi}{\overset{\on{def}}{=}}
\newcommand\set[1]{\left\{#1\right\}}
\newcommand\pa[1]{\left(#1\right)}
\newcommand{\E}{\mathbf{e}}
\newcommand\av[1]{\left|#1\right|}
\newcommand\on[1]{\operatorname{#1}}
\newcommand\smallmat[1]{\pa{\begin{smallmatrix}#1\end{smallmatrix}}}
\newcommand{\lra}{\longrightarrow}
\newcommand{\onto}{\xymatrix{\ar@{>>}[r]&}}
\newcommand{\da}[4]{\xymatrix{#1 \ar@<.5ex>[r]^{#2} \ar@<-.5ex>[r]_{#3} & #4}}
\newif\ifdraft\drafttrue
\font\sn = cmssi8 scaled \magstep0
\begin{document}
\title{A volume estimate for the set of stable lattices}
\author{Uri Shapira}
\address{Dept. of Mathematics, Technion, Haifa, Israel
{\tt ushapira@tx.technion.ac.il}
}
\author{Barak Weiss}
\address{Dept. of Mathematics, Tel Aviv University, Tel Aviv, Israel
{\tt barakw@post.tau.ac.il}}

\maketitle
\begin{abstract}
We show that in high dimensions the set of stable lattices is almost of full measure in the space of
unimodular lattices.
\end{abstract}
\quad\\
Let $G\defi \SL_n(\bR)$, $\Ga\defi\SL_n(\bZ)$, and let $A \subset G$
denote the subgroup of diagonal matrices with positive entries. The quotient space
$\Xn\defi G/\Ga$ is naturally identified with the space of unimodular
lattices in $\bR^n$, and the group $G$ (and any of it subgroups) acts
via left translations, or equivalently, by acting on lattices via its
linear action on $\R^n$. A lattice $\Lam$ is called
{\em stable}  if for any subgroup
$\Del \subset \Lam$, one has  
$\on{vol}\pa{\Del\otimes\bR /\Del} \ge 1$ (in the literature the term
{\em semi-stable} is also used), and we denote the set of stable
lattices by $\cS^{(n)}$.

A central problem is to understand the orbits
of the $A$-action on $\Xn$.
In~\cite{SWjems} we proved that for any lattice $\Lam\in \Xn$, the
orbit-closure $\overline{A\Lam}$ contains a stable lattice. 
This result reduces the proof of Minkowski's conjecture on the product of
inhomogeneous  
linear forms to that of estimating the Euclidean covering radius of
stable lattices (see~\cite{SWjems} for details). Understanding stable
lattices is therefore a natural problem due to its connection both
with well-studied problems in the 
geometry of numbers, and with dynamics of the $A$-action. Although
$\cS^{(n)}$ is compact (while $\Xn$ is not), in this note we show that
$\cS^{(n)}$ has almost
full measure with respect to the natural probability measure on $\Xn$,
for large $n$. Moreover the convergence to full measure is very
fast. This answers a question we were asked by G. 
Harder, and can be viewed as a manifestation of the
concentration of mass along the equator in high dimensional Euclidean
balls. 

We will prove the following.
\begin{theorem}\Name{vol est theorem}
Let $m$ denote the $G$-invariant probability measure on
$\Xn$ derived from Haar measure on $G$, and let $\cS^{(n)} $
denote the subset of stable lattices in $\Xn$. Then there is a
constant $C>0$ such that for all sufficiently large $n$, 
$$
m\left(\Xn \sm \cS^{(n)} \right) \leq \left(
  \frac{C}{n}\right)^{\frac{n-1}{2}}. 
$$
In particular $m\left(\cS^{(n)} \right)\lra 1$ as $n \to \infty$. 
\end{theorem}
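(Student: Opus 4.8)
The plan is to bound, for each rank $k\in\{1,\dots,n-1\}$, the measure of the set of lattices possessing a primitive sublattice of rank $k$ and covolume less than $1$, and then sum over $k$. Write $\alpha_k(\Lam):=\min\{\Vol(\Del\otimes\bR/\Del):\Del\subset\Lam\text{ primitive of rank }k\}$, so that $\Lam\in\cS^{(n)}$ if and only if $\alpha_k(\Lam)\ge1$ for all $k$. A primitive rank-$k$ sublattice $\Del\subset\Lam$ and its orthogonal complement $\Del^\perp\cap\Lam^*$ in the dual lattice have equal covolume, so $\alpha_k(\Lam)=\alpha_{n-k}(\Lam^*)$; since $\Lam\mapsto\Lam^*$ preserves $m$, it suffices to estimate $2\sum_{k=1}^{\lfloor n/2\rfloor}m(\{\alpha_k<1\})$. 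By Markov's inequality $m(\{\alpha_k<1\})\le\int_{\Xn}N_k\,dm$, where $N_k(\Lam)$ denotes the number of primitive rank-$k$ sublattices of $\Lam$ of covolume $<1$.

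To evaluate $\int_{\Xn}N_k\,dm$ I would pass to $k$-th exterior powers. A primitive rank-$k$ sublattice of $\Lam$ is recorded, up to sign, by a primitive \emph{decomposable} vector in $\bigwedge^k\Lam$, a unimodular lattice in $\bigwedge^k\bR^n\cong\bR^{\binom nk}$; its norm is the covolume. The group $\Ga=\SL_n(\bZ)$ acts transitively on these vectors, with the stabilizer of $e_1\wedge\dots\wedge e_k$ a lattice $\Ga_0$ in the unimodular group $P=(\SL_k\times\SL_{n-k})\ltimes\Mat_{k,n-k}(\bR)$, and the orbit map $G\to\bigwedge^k\bR^n$ descends to the cone $\widehat{\mathrm{Gr}}$ of decomposable vectors. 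Unfolding the resulting sum over $\Ga/\Ga_0$ against Haar gives
\[
\int_{\Xn}N_k\,dm\;=\;\tfrac12\,\frac{\Vol(P/\Ga_0)}{\Vol(G/\Ga)}\;\nu\big(\widehat{\mathrm{Gr}}\cap B(0,1)\big),
\]
where $\nu$ is the $G$-invariant measure on $\widehat{\mathrm{Gr}}\sm\{0\}\cong G/P$, normalized compatibly with the Haar measures. Here $\Vol(P/\Ga_0)/\Vol(G/\Ga)=\prod_{j=2}^k\zeta(j)\big/\prod_{j=n-k+1}^n\zeta(j)$, which for $k\le n/2$ lies between $1$ and $\prod_{j\ge2}\zeta(j)$; and $\nu$, being $G$-invariant on a cone, is homogeneous of degree $n$ under dilations, so $\nu(\widehat{\mathrm{Gr}}\cap B(0,T))=T^n\,\nu(\widehat{\mathrm{Gr}}\cap B(0,1))$. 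Comparing with the known asymptotics for rational $k$-planes of bounded height — the number of $V\in\mathrm{Gr}(k,n)(\bQ)$ with $\Vol(V\cap\bZ^n)\le T$ is $\sim v_{n,k}T^n$, a theorem of W. M. Schmidt — one identifies $\int_{\Xn}N_k\,dm=v_{n,k}$. For $k=1$ this is Siegel's mean value theorem: $v_{n,1}=V_n/(2\zeta(n))$ with $V_n=\pi^{n/2}/\Gamma(\tfrac n2+1)$ the volume of the Euclidean unit ball.

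It then remains to estimate $\sum_k v_{n,k}$, the point being that the extreme ranks $k=1$ and $k=n-1$ dominate. Indeed $2v_{n,1}=V_n/\zeta(n)$ is, by Stirling, of order $n^{-1/2}(2\pi e/n)^{n/2}$, which is already below $(C/n)^{(n-1)/2}$ as soon as $C>2\pi e$ and $n$ is large. For $2\le k\le n-2$ one must show that these terms contribute nothing more, which can be carried out either by invoking Schmidt's explicit evaluation of $v_{n,k}$, or by bounding $\int_{\Xn}N_k\,dm$ directly from above: $\alpha_k(\Lam)<1$ forces $k$ linearly independent vectors of $\Lam$ whose norms multiply to at most $2^k/V_k$ (Minkowski's second theorem), and splitting the integral according to the size of $\lam_1(\Lam)$ — applying Siegel's formula to the truncated counts and then optimizing the truncation — produces bounds small enough to be absorbed. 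In either case $m(\Xn\sm\cS^{(n)})\le 2\sum_{k\le n/2}v_{n,k}\le(C/n)^{(n-1)/2}$ for a suitable absolute constant $C$ and all large $n$, and in particular $m(\cS^{(n)})\to1$.

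The main obstacle is the uniform control of the higher-rank contributions. One cannot shortcut it by applying Siegel's formula to $\bigwedge^k\Lam$ inside $\SL_{\binom nk}(\bR)/\SL_{\binom nk}(\bZ)$: the lattices $\bigwedge^k\Lam$ form a very thin family there, and in fact the mean number of \emph{all} short vectors of $\bigwedge^k\Lam$ is infinite for $k$ near $n/2$. What makes the argument work is that stability is governed only by \emph{decomposable} short vectors — i.e.\ by genuine sublattices — and this restriction is precisely what keeps $\int_{\Xn}N_k\,dm$ finite and small. Pinning down the constant in that integral (equivalently, the invariant volume of a ball-section of the Grassmann cone, i.e.\ Schmidt's constant $v_{n,k}$) and establishing the required decay uniformly in $k$ is the technical heart of the proof; the stated super-exponential rate itself comes from the rank-one term via Siegel's classical formula.
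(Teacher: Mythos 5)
Your framework is the same as the paper's: a first-moment (Siegel--Weil type) bound applied to the count of primitive rank-$k$ sublattices viewed as decomposable vectors in $\bigwedge^k\bR^n$, with the resulting mean identified with the constant in the Thunder/Schmidt asymptotics for rational $k$-planes of bounded height, and then a sum over $k$ in which the extreme ranks dominate. But there is a genuine gap at exactly the decisive step: you never establish the uniform-in-$k$ smallness of $\int_{\Xn}N_k\,dm$ (your $v_{n,k}$, the paper's $B(n,k)/n$ up to normalization) for $2\le k\le n/2$. Your explicit computation covers only $k=1$; for the middle ranks you say the bound ``can be carried out either by invoking Schmidt's explicit evaluation of $v_{n,k}$, or by bounding $\int_{\Xn}N_k\,dm$ directly,'' and you yourself call this the technical heart --- but you do not do it. Without it, the inequality $2\sum_{k\le n/2}v_{n,k}\le (C/n)^{(n-1)/2}$ is an assertion, not a proof: a priori the $\binom{n}{2}$-type middle terms could swamp the rank-one term. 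In the paper this is precisely Lemma \ref{lem: bound on Bin}: writing the constant as the explicit product $\prod_{j=1}^{k}R(n-k+j)/R(j)$ with $R(j)=j^2V_j/\zeta(j)$ and running a Stirling estimate on the Gamma-factor ratios gives $B(n,k)\le (C/n)^{k(n-k)/2}$, i.e.\ the middle ranks are not merely comparable to the $k=1$ term but vastly smaller, which is what makes the crude union bound over the $n-2$ values of $k$ close (Proposition \ref{prop: main estimate} and the two-line deduction of the theorem).

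Two further cautions. First, ``invoking Schmidt's explicit evaluation'' does not by itself finish the job: the explicit constant is a product of zeta values and ball volumes, and one still has to carry out the Stirling-type analysis to extract the $(C/n)^{k(n-k)/2}$ (or at least $o(n^{-(n-1)/2})$ uniformly in $k$) decay --- this is the same computation as the paper's lemma, just relocated. Second, your proposed alternative via Minkowski's second theorem plus a truncation of Siegel's formula in rank one is vague and unlikely to yield the required super-exponential precision uniformly in $k$ without substantial work; the number of sublattices compatible with a given profile of successive minima must be controlled, and nothing in the sketch does this. The surrounding reductions you make (duality $\al_k(\Lam)=\al_{n-k}(\Lam^*)$ in place of the symmetry $B(n,k)=B(n,n-k)$, the Markov step, the unfolding over $G/H$) are all correct and match the paper, so the fix is simply to supply the missing estimate, e.g.\ by reproducing the Stirling bound on the explicit product.
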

For $\Lam\in\Xn$ and a subgroup $\Del \subset \Lam$, we denote by
$r(\Del)$ its \textit{rank} and by $\av{\Del}$ its \textit{covolume}
in the Euclidean 
subspace $\Del\otimes \bR\subset \bR^n$.
For $k=1, \ldots,  n-1$ let us denote
$\cV_k(\Lam)\defi\set{\av{\Del}^{1/k}:\Del \subset\Lam, r(\Del)=k}$ and
$\al_k(\Lam)=\min\cV_k(\Lam)$ so that $\Lam$ is stable if and only if $\al_k(\Lam)\ge 1$ for $k=1,\dots, n-1$. 
Let 
$$
\cS^{(n)}_k(t) \defi\set{x\in \Xn: \al_k(x)\ge t}, \ \ \cS^{(n)}_k
\df \cS^{(n)}_k(1).
$$
With this notation  
$\cS^{(n)}=\bigcap_{k=1}^{n-1}\cS^{(n)}_k$. We will show: 
\begin{proposition}\Name{prop: main estimate}
There is $C>0$ such that for all sufficiently large $n$, and all $k
\in \{1, \ldots, n-1 \}$, 
\eq{1312}{
m\left(\Xn \smallsetminus \cS^{(n)}_k \right) \leq \frac1n \left(\frac{C}{n}
\right)^{\frac{k(n-k)}{2}}. 
}
\end{proposition}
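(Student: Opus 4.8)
The plan is to bound the measure of the set of lattices having a sublattice of rank $k$ with small covolume by a Siegel–Veech / Rogers-type mean value computation. Fix $k$ and set $t=1$. A lattice $x \in \Xn$ fails to lie in $\cS_k^{(n)}$ precisely when it has a primitive sublattice $\Del$ of rank $k$ with $\av{\Del} < 1$. Passing to the Grassmannian, such a $\Del$ spans a rational $k$-dimensional subspace $V$, and $\av{\Del}$ is the covolume of $x \cap V$ inside $V$. So I would consider the counting function $N_k(x) \defi \#\set{\Del \subset x : r(\Del) = k,\ \text{$\Del$ primitive},\ \av{\Del} < 1}$, and note that $m(\Xn \sm \cS_k^{(n)}) \le \int_{\Xn} N_k(x)\, dm(x)$ by Markov's inequality, since $N_k \ge 1$ on the complement of $\cS_k^{(n)}$.

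The next step is to evaluate $\int_{\Xn} N_k\, dm$. This is exactly the kind of sum over primitive rank-$k$ sublattices handled by the higher-rank Rogers formula (the Siegel mean value theorem for the exterior power representation, i.e.\ for sublattices rather than vectors; see Rogers, and the Grassmannian version in work of Thunder and of Schmidt on heights of subspaces). The outcome is that $\int_{\Xn} N_k\, dm$ equals, up to an explicit constant depending on $n$ and $k$, the volume of the set of $k \times n$ real matrices of covolume less than $1$ modulo $\GL_k(\bZ)$ — equivalently, $\zeta$-function factors times the volume of the region $\{W \in \mathrm{Mat}_{k \times n}(\bR)/\GL_k(\bZ) : \det(WW^t)^{1/2} < 1\}$. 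Concretely one gets
$$
\int_{\Xn} N_k \, dm \ = \ \left(\prod_{j=n-k+1}^{n} \zeta(j)\right)^{-1} \cdot \prod_{j=1}^{k} \zeta(j+1)^{-1} \cdot \Vol\!\left(\mathcal{M}_{k,n}\right),
$$
or some such normalization; the precise constant is not important, what matters is extracting its asymptotics. The volume factor is, by the standard Gram-determinant change of variables, a product of volumes of Euclidean balls in dimensions $n, n-1, \ldots, n-k+1$ times a Gamma-function prefactor, and $\Vol(B^d) = \pi^{d/2}/\Gamma(d/2+1)$ decays superexponentially in $d$.

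The final step is the asymptotic estimate: using $\Vol(B^d) = \pi^{d/2}/\Gamma(1+d/2)$ and Stirling, one finds that the product of the $k$ ball-volumes in dimensions $n-k+1, \ldots, n$ behaves like $(c/n)^{k(n-k)/2 + O(k^2)}$ for an absolute constant $c$, the point being that each of the $k$ factors contributes roughly $(c/n)^{(n-k)/2}$ up to lower-order corrections in the exponent. The Gamma prefactor from the Gram change of variables and the zeta factors are all bounded by absolute constants to the power $O(k^2) = O(k(n-k))$ (since $k \le n-1$ forces comparisons; more carefully one treats $k \le n/2$ and $k \ge n/2$ symmetrically, using that a lattice has a rank-$k$ sublattice of covolume $<1$ iff its dual has a rank-$(n-k)$ sublattice of covolume $<1$, so one may assume $k \le n/2$). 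Absorbing everything into the constant $C$ and keeping one explicit factor of $1/n$ to spare, this yields the claimed bound $\frac1n (C/n)^{k(n-k)/2}$.

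The main obstacle is the bookkeeping in the asymptotic step: getting the exponent to come out as exactly $k(n-k)/2$ rather than something like $k(n-k)/2 - Ck^2$ requires care, because naively the ball volumes give an exponent with a correction term in $k$. The duality reduction to $k \le n/2$ is what makes this work, since then $k^2 \le k(n-k)$ and the error is genuinely absorbed into the base constant $C$; I would make sure the Rogers-formula constants (the zeta products and Gamma factors) are likewise controlled uniformly by $C^{k(n-k)}$ on this range. A secondary technical point is justifying the mean value formula for the primitive-sublattice counting function with the covolume cutoff — one should either cite it in the needed generality or derive it from Rogers' formula by an inclusion–exclusion over the cutoff, but this is routine.
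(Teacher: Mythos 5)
Your proposal is correct and follows essentially the same route as the paper: the paper also bounds $m(\Xn\smallsetminus\cS^{(n)}_k)$ by the mean value of the counting function over primitive rank-$k$ sublattices of covolume less than $1$, evaluates that mean via the Siegel--Weil normalization together with Thunder's explicit formula $\int_{\mathscr V}\chi_t\,dm_{G/H}=B(n,k)t^n/n$ (your ``Rogers-type'' constant made explicit), and then bounds $B(n,k)$ by Stirling using the symmetry $B(n,k)=B(n,n-k)$ to reduce to $k\le n/2$, exactly your duality step. The only difference is that the paper's explicit constant lets the exponent $\frac{k(n-k)}{2}$ come out cleanly without the $O(k^2)$ bookkeeping you anticipated.
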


\begin{proof}[Proof of Theorem \ref{vol est theorem}]
For $n>C$, the largest value of $\left( \frac{C}{n}\right)^{\frac{k(n-k)}{2}}$
is attained when $k=1$ and $k= n-1$. Therefore \equ{1312} implies
\begin{align*}
m\left(\Xn \sm \cS^{(n)}\right )&= m\left(\Xn\smallsetminus \bigcap_{k=1}^{n-1}
\cS^{(n)}_k \right) = m \left( \bigcup_{k=1}^{n-1} \Xn 
\sm \cS^{(n)}_k \right) \\
& \le 
\frac{n-2}{n} \left(\frac{C}{n} \right)^{\frac{n-1}{2}} \leq
\left(\frac{C}{n} \right) ^{\frac{n-1}{2}}. 
\end{align*}
\end{proof}

We will also show: 
\begin{proposition}\Name{prop: strengthening volume} There is $C_1>0$
  such that if we set 
\eq{eq: choice of t}{
t_k=t(n,k) \df \left(\frac{n}{C_1} \right)^{\frac{n-k}{2n} },
} 
then 
$$ \max_{k=1, \ldots, n-1} m\left(\Xn \smallsetminus
  \cS^{(n)}_k\left(t_k\right) \right) =o 
\left( \frac1n \right).
$$
In particular, $m\left(\bigcap_{k=1}^{n-1}
  \cS^{(n)}_k\left(t_k\right ) \right) \to_{n \to \infty} 1.$
\end{proposition}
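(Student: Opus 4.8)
The plan is to observe that the proof of Proposition \ref{prop: main estimate} does not really use the value $t=1$: it bounds $m\pa{\Xn\sm\cS^{(n)}_k(t)}$ by a first-moment count of short rank-$k$ sublattices, whose dependence on $t$ is an explicit power law, so that Proposition \ref{prop: strengthening volume} will follow simply by optimizing the choice of $t$. Indeed, $\Xn\sm\cS^{(n)}_k(t)$ is precisely the set of $\Lam\in\Xn$ admitting a rank-$k$ subgroup $\Del$ with $\av{\Del}<t^{k}$, and, restricting to primitive sublattices in the definition of $\al_k$ (which does not change its value, since the primitive closure of a rank-$k$ sublattice has no larger covolume), one sees that $m\pa{\Xn\sm\cS^{(n)}_k(t)}$ is at most the expected number of primitive rank-$k$ sublattices $\Del$ of a random $\Lam\in\Xn$ with $\av{\Del}<t^{k}$. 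By a Rogers-type mean-value identity for primitive $k$-dimensional sublattices, this expectation equals $c(n,k)\,(t^{k})^{n}=c(n,k)\,t^{kn}$, where $c(n,k)$ is an explicit constant assembled from ball volumes and values of $\zeta$; setting $t=1$ recovers exactly the quantity that the proof of Proposition \ref{prop: main estimate} bounds by $\tfrac1n\pa{C/n}^{k(n-k)/2}$. Hence that same proof yields, for every $t>0$ and every $k\in\set{1,\dots,n-1}$,
$$
m\pa{\Xn\sm\cS^{(n)}_k(t)}\ \le\ \frac1n\pa{\frac Cn}^{\frac{k(n-k)}{2}}t^{kn}.
$$

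Now substitute $t=t_k=\pa{n/C_1}^{(n-k)/(2n)}$, which is designed exactly so that $t_k^{kn}=\pa{n/C_1}^{k(n-k)/2}$; the two powers of $n$ then cancel and we obtain
$$
m\pa{\Xn\sm\cS^{(n)}_k(t_k)}\ \le\ \frac1n\pa{\frac{C}{C_1}}^{\frac{k(n-k)}{2}}.
$$
The function $k\mapsto k(n-k)$ is concave and equals $n-1$ at $k=1$ and at $k=n-1$, so $k(n-k)\ge n-1$ on $\set{1,\dots,n-1}$; fixing any $C_1>C$ therefore gives $\pa{C/C_1}^{k(n-k)/2}\le\pa{C/C_1}^{(n-1)/2}$ uniformly in $k$, and hence
$$
\max_{1\le k\le n-1}m\pa{\Xn\sm\cS^{(n)}_k(t_k)}\ \le\ \frac1n\pa{\frac{C}{C_1}}^{\frac{n-1}{2}}\ =\ o\pa{\frac1n}.
$$
Note that $t_k\ge 1$ precisely when $n\ge C_1$, so for large $n$ this is a genuine strengthening of Proposition \ref{prop: main estimate}. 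The final assertion is then immediate from the union bound: $m\pa{\bigcap_{k=1}^{n-1}\cS^{(n)}_k(t_k)}\ge 1-\sum_{k=1}^{n-1}m\pa{\Xn\sm\cS^{(n)}_k(t_k)}\ge 1-(n-1)\cdot o(1/n)\to 1$.

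The one step that is not a formal manipulation — and hence the main thing to be careful about — is the claim that the estimate of Proposition \ref{prop: main estimate} carries the clean scaling $t^{kn}$, uniformly in $k$ and valid for all $t$ rather than only $t\le 1$. This holds because the natural route to that estimate is to bound the first moment of the Siegel transform of $\mathbf{1}_{\{\av{\Del}<t^{k}\}}$ over primitive rank-$k$ sublattices, and by homogeneity of the underlying mean-value integral this first moment is exactly a constant times $t^{kn}$; any cruder route (covering the Grassmannian, Schmidt-type sublattice-counting bounds, and so on) produces the same power of $t$ with at worst a larger constant, which only forces $C_1$ to be taken larger. Granting this, the remainder is the elementary optimization above, whose only quantitative input beyond Proposition \ref{prop: main estimate} is the inequality $k(n-k)\ge n-1$ on $\set{1,\dots,n-1}$ used to absorb the loss uniformly over $k$.
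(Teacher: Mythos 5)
Your proposal is correct and follows essentially the same route as the paper: the paper also bounds $m\pa{\Xn\sm\cS^{(n)}_k(t)}$ by the first moment $\int_{\mathscr{V}}\chi_{t^k}\,d\theta = B(n,k)t^{kn}/n$ (via the Siegel--Weil normalization and Thunder's formula), bounds $B(n,k)\le (C/n)^{k(n-k)/2}$ by Lemma \ref{lem: bound on Bin}, and then chooses $t_k$ with $C_1>C$ exactly so that $t_k^{kn}=(n/C_1)^{k(n-k)/2}$ cancels the power of $n$, finishing with $k(n-k)\ge n-1$. The only cosmetic difference is that the paper proves Propositions \ref{prop: main estimate} and \ref{prop: strengthening volume} in one stroke with the same computation, rather than deducing the $t$-dependence after the fact.
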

\begin{remarks}
1.
Let us define
$\bar{\al}_{n,k}\defi\sup\set{\al_k(\Lam):\Lam\in\Xn}$. These
quantities are powers of 
the so-called {\em Rankin constants} or {\em generalized Hermite
constants} usually denoted by $\gamma_{n,k}$ (see \cite{Thunder}),
namely they are related by
\eq{eq: relation alpha gamma}
{\bar{\alpha}_{n,k}^{2k} = \gamma_{n,k}.
}
The origin of this exponent $2k$ is
the $1/k$ in the definition of $\mathcal{V}_k$, which we have imposed
so that the functions $\alpha_k$ behave nicely with respect to
homothety. This normalization has the additional advantage that the
growth rate of the different $\bar{\alpha}_{n,k}$ (as a function of
$n$) becomes the same for all $k$. Namely \cite[Cor. 2]{Thunder} and \equ{eq: relation
  alpha gamma} show that $\log \bar{\alpha}_{n,k} = \frac 12 \log
n + O(1)$ (where the implicit constant depends
on $k$). 

2. It seems plausible that most
lattices come close to realizing the Rankin constants, that is, for any
$\vre>0$, 
$$\lim_{n \to \infty}
m\pa{\set{\Lam\in\Xn: \forall k, \, \al_k(\Lam)>\overline{\al}_{n,k}-\vre}}=1.$$  
Combined with the result of Thunder mentioned above,  Proposition \ref{prop: strengthening volume}  may
 be viewed as supporting evidence for such a conjecture.  

3. We take this opportunity to formulate an analogous question regarding
the {\em covering radius}; that is, is it true that for any
$\vre>0$, 
$$\lim_{n \to \infty} m\set{\Lam\in\Xn:\on{covrad}(\Lam) <
  \inf_{\Lam'\in\Xn}\on{covrad}(\Lam') +\vre}=1,$$ 
where
$$\on{covrad}(\Lam)=\inf\set{r>0: \bR^n=\Lam+B(0,r)}$$ 
and $B(0,r)$ is the Euclidean ball of radius
$r$ around the origin. 
\end{remarks}

The proof of Propositions \ref{prop: main estimate} and \ref{prop: strengthening volume} relies on
Thunder's work and on a variant of Siegel's
formula~\cite{SiegelFormula} which relates the Lebesgue measure
on $\bR^n$ and the measure $m$ on $\Xn$. We now review Siegel's
method and Thunder's results.
 
In the sequel we consider $n \geq 2$ and $k \in \{1,
\ldots, n-1\}$ as fixed and omit, unless there is risk of confusion,
the symbols $n$ and $k$ from the notation.  
Consider the (set valued) map $\Phi=\Phi^{(n)}_k$ that assigns to
each lattice $\Lam\in \Xn$ the following subset of the exterior power
of $\bigwedge^k\bR^n$:
$$\Phi (\Lam)\defi\set{\pm w_\Del:\Del \subset \Lam\textrm{ a
    primitive subgroup with } r(\Del)=k},$$ 
where $w_\Del\defi v_1\wedge\dots\wedge v_k$ and $\set{v_i}_{i=1}^k$ 
is a basis for $\Del$ (note that $w_\Del$ is well-defined up to
sign, and $\Phi(\Lam)$ contains both possible choices). 
Let $$\mathscr{V} = \mathscr{V}^{(n)}_k \defi
\set{v_1\wedge\dots\wedge v_k: v_i\in\bR^n} \sm \{0\}$$ 
be the variety of pure tensors in $\bigwedge^k\bR^n$.  
For any compactly supported bounded Riemann
integrable\footnote{i.e. the measure of points at which $f$ is not
  continuous is zero.} function $f$
on $\mathscr{V}$ set  
\eq{eq: finite sum}{\hat{f}: \Xn \to \R, \ \ \ \ 
  \hat{f}(\Lam)\defi\sum_{w\in\Phi (\Lam)}f(w).}
Then it is known  (see \cite[Lemma 2.4.2]{Weil}) that the (finite) sum \equ{eq: 
  finite sum}  
defines a function in $L^1(\Xn, m)$. 
This allows us to define a Radon measure  $\theta = \theta^{(n)}_k$ on $\mathscr{V}$
by the formula 
\begin{equation}\label{1420}
\int_{\mathscr{V}} f d\theta \defi\int_{\Xn} \hat{f} \, dm, \text{  for 
  \ } f\in C_c(\mathscr{V}).
\end{equation}
Write $G=G_n \df \SL_n(\R)$. 
There is a natural transitive action of $G_n$ on $\mathscr{V}$ 
and the stabilizer of  
$e_1\wedge\dots\wedge e_k$ is the subgroup 
$$H= H^{(n)}_k \df \left\{ \smallmat{A&B\\0&D} \in G: 
A \in G_{k} , D \in G_{n-k} \right \}. $$
We therefore obtain an identification $\mathscr{V}\simeq G/H$ and view
$\theta$ as a measure on $G/H$.  
It is well-known (see e.g.~\cite{Weil}) that up to a proportionality
constant there exists a unique $G$-invariant measure 
$m_{G/H}$ on $G/H$; moreover, given Haar
measures $m_{G}, m_{H}$ on $G$ and $H$ respectively, there is a
unique normalization of $m_{G/H}$ such that 
for any $f\in L^1(G,m_G)$
\eq{1440}{
\int_G f \, dm_G =\int_{G/H}\int_{H} f(gh) dm_{H}(h)dm_{G/H}(gH).
}
We choose the Haar measure $m_G$ so that it
descends to our probability measure $m$ on $\Xn$;  similarly, we  
choose the Haar measure $m_{H}$ so that the periodic orbit
$H\bZ^n \subset \Xn$ has volume 1. These choices of Haar measures
determine our measure $m_{G/H}$ unequivocally. 
It is clear from the defining formula~\eqref{1420} that $\theta$ is
$G$-invariant and therefore 
the two measures $m_{G/H}, \theta$ are proportional. In fact (see
\cite{SiegelFormula} for the case $k=1$ and  \cite[Lemma 2.4.2]{Weil} for the general case), 
\eq{eq: Siegel normalization}{m_{G/H} = \theta.
}
\ignore{
\begin{proof}
We need to
calculate the proportionality constant relating the measures. 
Choose a fundamental domain $F\subset G$ for $\Ga\defi \SL_n(\bZ)$ and
another fundamental domain $\hat{F}\subset H$ for $\hat{\Ga}\defi H\cap
\Ga$ and note that  by our choices 
$$m_G(F)=m_{H}(\hat{F})=1.$$ 
Let $\pi: G \to G/H$ be the natural projection. By the implicit
function theorem there is a bounded 
$U\subset G$ for which $\pi|_U$ is a homeomorphism onto its image and
the image is an open neighborhood of the identity coset.  
%
Since $H=\bigsqcup_{\hat{\ga}\in\hat{\Ga}}\hat{F}\hat{\ga} $ and the
  product map $U\times H\to G$ is injective,  
we find that 
\eq{2130}{
\chi_{UH}(g)=\sum_{\hat{\ga}\in\hat{\Ga}}\chi_{U\hat{F}}(g\hat{\ga}).
}
We now show that 
\eq{1655}{
\chi_{UH}(g)=\int_{H}\chi_{U\hat{F}}(gh)dm_{H}(h).
} 
Indeed, if $g\notin UH$ then both sides
of~\eqref{1655} vanish. Otherwise, 
write $g=u h$, and let 
$h_0\in H$  
such that $gh_0\in U\hat{F}$, so that the integrand is nonzero. Then there
are $u'\in U, \hat{f}\in \hat{F}$ such that $u hh_0=u'\hat{f}$. By the 
injectivity of $U\times H\to G$ we conclude that $u=u'$ and  
$h_0=h^{-1}\hat{f}$. That is, $\set{h_0\in H : gh_0\in U\hat{F}}=
h^{-1} \hat{F}$ and so for a given $g\in G$,  
the right hand side of~\eqref{1655}
equals $m_{H}(h^{-1}\hat{F})=1$ as desired.

As before, let $\E_1, \ldots, \E_n$ denote the standard basis of
$\R^n$. 
Given a lattice $x=g\Z^n$ corresponding to the coset $g\Gamma \in
\Xn$,  we have 
$$\Phi_k(x)=\set{g\ga (\E_1\wedge\dots\wedge \E_k):\ga \in\Ga'}$$ 
where $\Ga' \subset \Ga$ is some set of coset representatives of $\hat{\Ga}$ in $\Ga$. Note that when
$\ga_1, \ga_2$ are distinct elements of $\Ga'$, the two tensors  
$g\ga_i (e_1\wedge\dots\wedge e_k)$, $i=1,2 
$ are different. 
Under 
  identification $\mathscr{V}\simeq G/H$, we can think of a function $\vphi$ on $\mathscr{V}$  
as a function on $G$ which is right-$H$-invariant and 
for $x=g\Ga\in \Xn$ we have 
$$\hat{\vphi} (x) =\sum_{w\in \Phi(x)}\vphi(w)= \sum_{\ga \in\Ga'}\vphi(g\ga).$$
Then
\begin{align}\label{1249}
\nonumber \int_G\chi_{U\hat{F}}\, dm_G&
\stackrel{\equ{1440}}{=}\int_{G/H}\int_{H}\chi_{U\hat{F}}(gh)dm_{H}(h)dm_{G/H}(gH)\\ 
&\overset{\equ{1655}}{=}\int_{G/H}\chi_{UH}(gH)dm_{G/H}(gH). 
\end{align} 
On the other hand
\begin{align}\label{1542}
\int_{G/H}\chi_{UH}\, d\theta &\stackrel{\equ{1420}}{=}\int_{\Xn}
\widehat{(\chi_{UH})} \, dm \\  
\nonumber&\overset{\equ{1249}}{=}\int_F\sum_{\ga' \in\Ga'}\chi_{UH}(g\ga')dm_G(g)\\ 
\nonumber&\overset{\equ{2130}}{=}\int_F\sum_{\ga'\in\Ga'}
\sum_{\hat{\ga}\in\hat{\Ga}}\chi_{U\hat{F}}(g\ga'\hat{\ga})dm_G(g)\\   
\nonumber&=\int_F\sum_{\ga\in\Ga}\chi_{U\hat{F}}(g\ga)dm_G(g)=\int_G\chi_{U\hat{F}}
\, dm_G.
\end{align}
By~\eqref{1249} and \eqref{1542} we have $\int_{G/H} \chi_{UH} \,
dm_{G/H} = \int_{G/H} \chi_{UH} 
d\theta$, and this integral is finite and  positive since 
$UH$ is open with compact closure. Thus 
the proportionality  
constant relating the two measures 
must be 1. 
\end{proof}
}
%

For $t>0$, 
let $\chi=\chi_t:\mathscr{V}\to\bR$ be the restriction to $\mathscr{V}$ of the
characteristic function of the ball of radius $t$ around the origin,
in $\bigwedge^k\bR^n$, with respect to the natural inner product
obtained from the Euclidean inner product on $\R^n$. 
Note that  
$\hat{\chi}(x)=0$ if and only if $x\in \cS^{(n)}_k\left(t^{1/k}\right)$ and furthermore,
$\hat{\chi}(x)\ge 1$ if $x\in \Xn\smallsetminus
\cS^{(n)}_k\left(t^{1/k} \right)$.  
It follows that
\eq{eq: using chi}{m\left(\Xn\smallsetminus
\cS_k^{(n)}(t)\right)\le\int_{\Xn}\widehat{(\chi_{t^k})} dm =\int_{\mathscr{V}}\chi_{t^k}
d\theta.
}

Let $V_j$ denote the volume of the Euclidean unit ball in
$\bR^j$ and let $\zeta$ denote the Riemann zeta function.  We will use
an unconventional convention $\zeta(1)=1$, which will make our
formulae simpler. 
For $j \geq 1$, define 
$$
R(j) \df 
 \frac{j^2 V_j}{ \zeta(j)} 
\quad\textrm{and}\;\; 
B( n,k)\defi \frac{\prod_{j=1}^nR(j)}{\prod_{j=1}^k R(j)\prod_{j=1}^{n-k}R(j)}.$$
The following is \cite[Lemma 5]{Thunder}:
\begin{theorem}[Thunder]{\label{Thunder}}
For $t>0$, we have 
$\int_{\mathscr{V} } \chi_t \, dm_{G/H}
=B( n,k)\frac{ t^n}{n}.$ 
\end{theorem}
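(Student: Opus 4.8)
The plan is to evaluate $\int_{\mathscr{V}}\chi_t\,dm_{G/H}$ directly, exploiting the cone structure of $\mathscr{V}$: the radial part of the integration will supply the factor $t^n/n$, and the normalizing constant will be fixed by reduction theory. Concretely, the assignment $w\mapsto(W_w,\|w\|)$, with $W_w$ the oriented $k$-plane spanned by $w$, is a $G$-equivariant diffeomorphism of $\mathscr{V}$ onto $\mathrm{Gr}^+(k,n)\times\R_{>0}$, where $G$ acts on the Grassmannian of oriented $k$-planes linearly and on the $\R_{>0}$-fibre through the $k$-volume distortion cocycle $j(g,W)=\|gw\|/\|w\|$, while $\mathrm{SO}(n)\subset G$ acts transitively on $\mathrm{Gr}^+(k,n)$ and fixes $\|w\|$. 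Hence $\mathrm{SO}(n)$-invariance of $m_{G/H}$ forces its angular part to be the $\mathrm{SO}(n)$-invariant probability measure $\mu_{\mathrm{Gr}}$ on $\mathrm{Gr}^+(k,n)$ up to scale, and imposing invariance under a single non-compact element, e.g. $g_s=\diag{e^{(n-k)s}I_k,\,e^{-ks}I_{n-k}}$, pins down the radial density. The latter step uses the standard fact that $\mu_{\mathrm{Gr}}$ is quasi-invariant under $G$ with Radon--Nikodym cocycle a fixed power $j(g,W)^{n}$ of the distortion cocycle --- equivalently, that the modular character of the standard parabolic with unipotent radical the upper-right block equals the $n$-th power of $\smallmat{A&B\\0&D}\mapsto|\det A|$ --- after which a change of variables forces $m_{G/H}=c\,(\mu_{\mathrm{Gr}}\otimes r^{n-1}\,dr)$ for some $c>0$. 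Therefore
\[
\int_{\mathscr{V}}\chi_t\,dm_{G/H}=c\int_{\mathrm{Gr}^+(k,n)}\!\!\!d\mu_{\mathrm{Gr}}\int_0^t r^{n-1}\,dr=c\,\frac{t^n}{n},
\]
and the theorem is reduced to the identity $c=B(n,k)$.

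To identify $c$ I would use $m_{G/H}=\theta$ and compute by unfolding the mean value $\int_{\Xn}\widehat{\chi_t}\,dm$ over $\SL_n(\Z)$, in the style of Siegel's and Rogers' evaluations of such averages over sublattices. A primitive rank-$k$ sublattice $\Delta\subset\Z^n$ is recorded, via Hermite--Minkowski reduction, by data whose unfolding produces an integral over $\SL_k(\Z)\backslash\SL_k(\R)$ (the shape of $\Delta$ inside its span), one over $\SL_{n-k}(\Z)\backslash\SL_{n-k}(\R)$ (the shape of the quotient lattice $\Z^n/\Delta$), one over $\SL_n(\Z)\backslash\SL_n(\R)$, together with radial and angular integrations contributing the Euclidean-ball volumes $V_j$, a compact torus $\Mat_{k\times(n-k)}(\R)/\Mat_{k\times(n-k)}(\Z)$ of volume $1$, and the determinant-one Jacobians contributing the factors $j^2$. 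With the normalizations fixed earlier ($m$ a probability measure on $\Xn$, $m_H$ giving the periodic orbit $H\Z^n$ volume $1$), the classical Minkowski--Siegel formula $\mathrm{vol}(\SL_m(\Z)\backslash\SL_m(\R))=\prod_{j=2}^m\zeta(j)$ then combines these pieces exactly into $R(j)=j^2V_j/\zeta(j)$ and $B(n,k)=\dfrac{\prod_{j=1}^nR(j)}{\prod_{j=1}^kR(j)\prod_{j=1}^{n-k}R(j)}$.

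The cone computation is routine once one recalls the Grassmannian Jacobian; the genuine work is the constant. Keeping the three Haar normalizations mutually consistent through the block decomposition, and matching every power of $\pi$, every zeta value, and every integer factor against the definition of $B(n,k)$, is delicate --- and is where essentially all the effort, and the only serious risk of error, lies. This is presumably why the authors simply quote \cite[Lemma 5]{Thunder} rather than reprove it.
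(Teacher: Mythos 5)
First, a point of comparison: the paper does not prove this statement at all --- it is quoted verbatim from \cite[Lemma 5]{Thunder} (with the notational remark $c(n,k)=B(n,k)/n$) --- so there is no internal proof to match your argument against; the question is whether your proposal stands on its own. Its first half does: identifying $\mathscr{V}$ with $\mathrm{Gr}^+(k,n)\times\R_{>0}$, using the quasi-invariance cocycle $j(g,W)^n$ of the rotation-invariant measure on the Grassmannian (equivalently the modular character $|\det A|^n$ of the parabolic containing $H$), and concluding that any $G$-invariant measure on the cone is $c\,\mu_{\mathrm{Gr}}\otimes r^{n-1}dr$, is correct and gives $\int_{\mathscr{V}}\chi_t\,dm_{G/H}=c\,t^n/n$. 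But this much is essentially the trivial part of the lemma: it is just $G$-invariance plus degree-$n$ homogeneity of $m_{G/H}$ under the fiberwise dilation, and it contains none of the arithmetic.

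The genuine gap is that the identity $c=B(n,k)$ --- the entire quantitative content of Thunder's lemma --- is never established. What you offer is a plan (``unfold the mean value over $\SL_n(\Z)$ in the style of Siegel and Rogers, using the block decomposition of $H$'') together with a list of ingredients asserted to ``combine exactly'' into $\prod R(j)/(\prod R(j)\prod R(j))$, but no unfolding is actually performed and no factor is actually matched. Worse, the inputs you quote are stated in normalizations that are not the paper's: the covolume formula $\mathrm{vol}(\SL_m(\Z)\backslash\SL_m(\R))=\prod_{j=2}^m\zeta(j)$ holds only for one specific choice of Haar measure, the ``volume $1$'' of the torus $\Mat_{k\times(n-k)}(\R)/\Mat_{k\times(n-k)}(\Z)$ likewise depends on how the unipotent radical's measure is normalized, and the provenance of the factors $j^2$ and $V_j$ in $R(j)$ is exactly the bookkeeping needed to translate those conventions into the paper's normalization ($m$ a probability measure on $\Xn$, $m_H$ giving $H\bZ^n$ volume $1$, and $m_{G/H}$ fixed by \equ{1440}). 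You acknowledge this yourself (``the genuine work is the constant \dots the only serious risk of error''), but acknowledging the hard step is not the same as doing it: as written, the proposal reduces the theorem to the statement $c=B(n,k)$ and then, in effect, re-cites Thunder for it. To make this a proof you would have to carry out the unfolding over primitive rank-$k$ subgroups explicitly --- computing $\int_{\Xn}\widehat{\chi_t}\,dm$ via the decomposition $H\cap\Ga\backslash H$ into the $\SL_k$, $\SL_{n-k}$ and $\Mat_{k\times(n-k)}$ pieces in the paper's normalizations --- and verify that the resulting constant equals $B(n,k)$; alternatively, simply cite \cite[Lemma 5]{Thunder} as the paper does.
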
 
\noindent (Note that in Thunder's notation, by \cite[\S4]{Thunder},
$c(n,k)=B(n,k)/n$.)

\medskip

We will need to bound $B( n,k)$. 
\begin{lemma}\Name{lem: bound on Bin}
There is $C> 0$ so that for all large enough $n$ and
all $k=1, \ldots, n-1$, 
\eq{eq: bound on Bin}{B( n,k)\leq
  \left(\frac{C}{n}\right)^{\frac{k(n-k)}{2}}.
}
\end{lemma}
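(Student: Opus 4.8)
The quantity $B(n,k)$ is a ratio of products of $R(j) = j^2 V_j / \zeta(j)$, so the first step is to get a clean handle on $R(j)$. The zeta factors are harmless: $1 \le \zeta(j) \le \zeta(2) = \pi^2/6$ for all $j \ge 2$ (and $\zeta(1)=1$ by convention), so $\prod_j \zeta(j)$ contributes only a factor that is exponential in the number of terms, i.e. of the form $c^n$ for a constant $c$, and such factors are negligible against the target bound $(C/n)^{k(n-k)/2}$ once $C$ is large. The real content is in $V_j$, the volume of the Euclidean unit ball in $\bR^j$, for which $V_j = \pi^{j/2}/\Gamma(j/2+1)$. Thus, up to the harmless zeta and constant-to-the-$n$ factors,
$$
B(n,k) \;\asymp\; \frac{\prod_{j=1}^n j^2 V_j}{\prod_{j=1}^k j^2 V_j \ \prod_{j=1}^{n-k} j^2 V_j},
$$
and the plan is to estimate the logarithm of this using Stirling's formula.

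**Key steps.** First, write $\log R(j) = 2\log j + \tfrac{j}{2}\log \pi - \log\Gamma(j/2+1) - \log\zeta(j)$ and, via Stirling, $\log\Gamma(j/2+1) = \tfrac{j}{2}\log(j/2) - \tfrac{j}{2} + O(\log j)$. Hence $\log R(j) = -\tfrac{j}{2}\log j + \tfrac{j}{2}\log(2\pi) + \tfrac j2 + O(\log j)$. Second, sum this: writing $P(n) \df \sum_{j=1}^n \log R(j)$, the dominant term is $-\tfrac12\sum_{j=1}^n j\log j = -\tfrac{n^2}{4}\log n + \tfrac{n^2}{8} + O(n\log n)$ (by comparison with $\int_1^n x\log x\,dx$), while the remaining terms $\sum \tfrac j2\log(2\pi) + \sum\tfrac j2$ are $O(n^2)$, and the error sum is $O(n\log n)$. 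Third, form $\log B(n,k) = P(n) - P(k) - P(n-k)$ and collect the leading terms. The $-\tfrac14 x^2\log x$ pieces combine to $-\tfrac14\big(n^2\log n - k^2\log k - (n-k)^2\log(n-k)\big)$; the $+\tfrac18 x^2$ pieces combine to $\tfrac18\big(n^2 - k^2 - (n-k)^2\big) = \tfrac14 k(n-k)$; the $\tfrac x2\log(2\pi)$ and $\tfrac x2$ pieces cancel entirely since $n = k + (n-k)$; and the error is $O(n\log n)$. Fourth, show the logarithmic main term is $\le -\tfrac12 k(n-k)\log n + O(n\log n)$: indeed $n^2\log n - k^2\log k - (n-k)^2\log(n-k) \ge n^2\log n - k^2\log n - (n-k)^2\log n = 2k(n-k)\log n$, since $\log k, \log(n-k) \le \log n$. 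Putting it together, $\log B(n,k) \le -\tfrac12 k(n-k)\log n + O(k(n-k)) + O(n\log n)$, and since $O(n\log n) \le \tfrac14 k(n-k)\log n$ is false only when $k(n-k)$ is small (i.e. $k=1$ or $k=n-1$, where $k(n-k)=n-1$ and $O(n\log n)$ is genuinely of that order times $\log n$) — so one must be slightly careful. The cleaner route is to absorb everything into the base: $\log B(n,k) \le \tfrac{k(n-k)}{2}\big(-\log n + C'\big) + O(n\log n)$, and then check directly that $O(n\log n) \le \tfrac{k(n-k)}{2}\log C''$ for suitable $C''$ using $k(n-k) \ge n-1$; absorbing $C'$ and $\log C''$ into a single constant $C$ gives $\log B(n,k) \le \tfrac{k(n-k)}{2}\log(C/n)$, which is \equ{eq: bound on Bin}.

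**Main obstacle.** The arithmetic of combining the Stirling expansions is routine but must be organized so that the cancellation $n = k+(n-k)$ is exploited cleanly; the genuine subtlety is the endpoint cases $k=1$ and $k=n-1$, where $k(n-k) = n-1$ is only linear in $n$ while the accumulated error from summing Stirling over $j=1,\dots,n$ is of order $n\log n$ — naively this error would swamp a target exponent of order $n$. The resolution is that in these extreme cases $B(n,1) = B(n,n-1) = R(n)/R(1)$ (the long products telescope almost completely), so one should treat the endpoints by direct computation from the single-term estimate $\log R(n) = -\tfrac n2\log n + O(n)$ rather than through the general summation bound; for the bulk range $2 \le k \le n-2$ one has $k(n-k) \ge 2(n-2)$, still only linear, so in fact the same care is needed throughout and the honest statement is that the $-\tfrac12 k(n-k)\log n$ term must be shown to dominate the $O(n\log n)$ error, which forces us to keep track of the constant in front of $\log n$ in the error term and verify $k(n-k)\log n$ beats it with room to spare once we pass to the base $C/n$ with $C$ large. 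I expect this bookkeeping — rather than any conceptual difficulty — to be where the proof needs the most attention.
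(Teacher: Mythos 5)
Your route (Stirling on $\log R(j)$, then comparing $P(n)-P(k)-P(n-k)$ globally) is genuinely different from the paper's, but as written it has a real gap, and it is exactly at the point you flag and then wave away: the error budget. Your accumulated lower-order error is recorded as $O(n\log n)$, while the main term $-\tfrac12 k(n-k)\log n$ exactly exhausts the $\log n$ part of the target \equ{eq: bound on Bin}; the only remaining slack is $\tfrac{k(n-k)}{2}\log C = O(k(n-k))$. Your proposed absorption, ``check directly that $O(n\log n)\le \tfrac{k(n-k)}{2}\log C''$ using $k(n-k)\ge n-1$,'' is false: for $k=1$ (and more generally whenever $\min(k,n-k)$ is of order $\log n$ or smaller) the left side is of order $n\log n$ and the right side only of order $n$, so no fixed $C''$ works for large $n$. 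Special-casing $k=1,n-1$ via $B(n,1)=R(n)/R(1)$ does not close this, since (as you yourself observe) $k(n-k)$ is still only linear for $k=2,3,\dots$, so the same failure persists throughout the small-$k$ range. There is also a minor slip: the $\tfrac j2\log(2\pi)$ and $\tfrac j2$ pieces do \emph{not} cancel in $P(n)-P(k)-P(n-k)$ — their partial sums are quadratic, and they contribute $\tfrac{\log(2\pi)+1}{2}\,k(n-k)$ — though this particular term is harmless, being $O(k(n-k))$.

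The missing idea is that the lower-order errors telescope, so the true error is far smaller than $O(n\log n)$. If $\epsilon(j)=O(\log j)$ denotes the deviation of $\log R(j)$ from your smooth main expression (a fixed function of $j$ alone), then
$$
P(n)-P(k)-P(n-k)\ \text{has error}\ \sum_{j=n-k+1}^{n}\epsilon(j)-\sum_{j=1}^{k}\epsilon(j)=O(k\log n)\qquad(k\le n/2),
$$
and $k\log n=o\bigl(k(n-k)\bigr)$, so this \emph{is} absorbable into $C^{k(n-k)/2}$ uniformly in $k$. Carrying this refinement through repairs your argument. This is, in essence, what the paper does from the start: after reducing to $k\le n/2$ by the symmetry $B(n,k)=B(n,n-k)$, it writes $B(n,k)=\prod_{j=1}^{k} R(n-k+j)/R(j)$ and bounds each single ratio, via one application of Stirling, by $c\,(c'/n)^{(n-k)/2}$; the multiplicative constants then enter only to the $k$-th power and are trivially dominated by $C^{k(n-k)/2}$. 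Pairing the factors (or, equivalently, telescoping the error) is the step your write-up lacks, and without it the bound fails precisely in the endpoint regime that drives Theorem \ref{vol est theorem}.
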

\begin{proof}In this proof $c_0, c_1, \ldots$ are constants independent of $n, k, j$. 
Because of the symmetry $B(n,k)=B(n,n-k)$ it is
enough to prove \equ{eq: bound on Bin} with $k\leq \frac{n}{2}.$
Using 
the formula
$V_j=\frac{\pi^{j/2}}{\Ga\left(\frac{j}{2}+1\right)}$ we obtain 
%
\begin{align*}
B(n,k)&=\prod_{j=1}^k\frac{R(n-k+j)}{R(j)}
=\prod_{j=1}^k\frac{\zeta(j)(n-k+j)^2\frac{\pi^{(n-k+j)/2}}{\Ga(\frac{n-k+j}{2}+1)}}
{\zeta(n-k+j)j^2\frac{\pi^{j/2}}{\Ga(\frac{j}{2}+1)}}\\
&=\prod_{j=1}^k \frac{\zeta(j)}{\zeta(n-k+j)}\cdot\pa{\frac{n-k+j}{j}}^2\cdot\pi^{\frac{n-k}{2}}\cdot
\frac{\Ga(\frac{j}{2}+1)}{\Ga(\frac{n-k+j}{2}+1)}.
\end{align*}
Note that $\zeta(s) \geq 1$ is a decreasing function of $s>1$, so
(recalling our convention $\zeta(1)=1$) 
$\frac{\zeta(j)}{\zeta(n-k+j)} \leq c_0 \df \zeta(2)$.
It follows that for all large enough $n$ and 
for any $1\le j\le k, $ 
\eq{eq: estimate first part}{
\frac{\zeta(j)}{\zeta(n-k+j)}\cdot\pa{\frac{n-k+j}{j}}^2\cdot\pi^{\frac{n-k}{2}}\le c_0
n^2\pi^{\frac{n-k}{2}}\le 4^{\frac{n-k}{2}}.
}
According to Stirling's formula, there are positive constants $c_1,
c_2$ such that for all $x \geq 2$, 
$$
c_1 \sqrt{\frac{2\pi}{x}}\left(\frac{x}{e} \right)^x \leq \Gamma(x)
\leq c_2 \sqrt{\frac{2\pi}{x}}\left(\frac{x}{e} \right)^x.
$$
We set $u \df \frac{j}{2}+1$ and $v \df \frac{n-k}{2} $, so that 
$u+v \geq \frac{n-1}{4}$, 
and obtain  
\eq{eq: estimate second part}{
\begin{split}
\frac{\Ga(\frac{j}{2}+1)}{\Ga(\frac{n-k+j}{2}+1)} & =
\frac{\Ga(u)}{\Ga(u+v)} \leq \frac{c_2}{c_1}
\sqrt{\frac{u+v}{u}}\frac{u^u}{(u+v)^{u+v}} \frac{e^{u+v}}{e^u} \\
& \leq c_3 e^v \frac{u^{u-1/2}}{(u+v)^{u+v-1/2}} = c_3
\left(\frac{e}{u+v}\right)^v \frac{1}{\left(1+\frac{v}{u}
  \right)^{u-1/2}}, 
\\
& \leq c_3 \left( \frac{4e}{n-1} \right)^{\frac{n-k}{2}}. 
\end{split}
}
Using \equ{eq: estimate first part} and \equ{eq: estimate
  second part} we obtain
$$
B( n,k) \leq \left[c_3 4^{\frac{n-k}{2}}
  \left(\frac{4e}{n-1}\right)^{\frac{n-k}{2}} \right]^k = \left[ c_3
  \left(\frac{16e}{n-1} \right)^{\frac{n-k}{2}} \right]^k.
$$
So taking $C > 16c_3 e$ 
we obtain \equ{eq: bound on
  Bin} for all large enough $n$. 
\end{proof}
\begin{proof}[Proof of Propositions \ref{prop: main
    estimate} and \ref{prop: strengthening volume}]
Let $C$ be as in Lemma \ref{lem: bound on Bin} and let $C_1>C$. For
Proposition \ref{prop: strengthening volume}, using 
\equ{eq: using chi}, \equ{eq: Siegel normalization} and 
Theorem~\ref{Thunder}, for all sufficiently large $n$ we have
\[\begin{split}
m\left(\Xn \smallsetminus \cS^{(n)}_k(t_k) \right) & \leq
B(n,k) \frac{t_k^{kn}}{n} \\
& \leq \frac1n \left(\frac{C}{n} \right)^{\frac{k(n-k)}{2}}
\left(\frac{n}{C_1} \right)^{\frac{k(n-k)}{2}} =  \frac1n \left(\frac{C}{C_1} \right)^{\frac{k(n-k)}{2}}.
\end{split}
\]
Multiplying by $n$ and taking the maximum over $k$ we obtain 
$$
n \, \max_{k=1, \ldots, n}  m\left(\Xn \smallsetminus
  \cS^{(n)}_k(t_k) \right) \leq \left(\frac{C}{C_1}
\right)^{\frac{n-1}{2}} \to_{n\to \infty} 0.
$$
The proof of Proposition \ref{prop: main estimate} is identical using
$t=1$ instead of $t_k$. 
\end{proof}

\ignore{
\begin{proof}[Proof of Proposition \ref{prop: main estimate}]
Let $C$ be as in Lemma \ref{lem: bound on Bin}. 
As above, 
for all sufficiently large $n$ we have
\[
m\left(\Xn \smallsetminus \cS^{(n)}_k \right) 
\leq
\frac{B(n,k) }{n} 
\leq \frac1n \left(\frac{C}{n} \right)^{\frac{k(n-k)}{2}} \leq
\frac{1}{n} \left( \frac{C}{n}\right)^{\frac{n-1}{2}}. 
\]
Hence by \equ{}, 
$$
1-m\left(\cS^{(n)} \right) \leq \sum_{k=1}^{n-1} m \left(\Xn \sm
  \cS^{(n)}_k \right) \leq \left(\frac{C}{n} \right )^{\frac{n-1}{2}}.
$$
\end{proof}
}

{\bf Acknowledgements.} The authors thank Prof. G\"unter Harder for
an 
interesting conversation regarding stable lattices and for 
raising the question addressed in this article. The work of the
authors was supported by ERC starter grant DLGAPS 279893 and ISF
grants 190/08, 357/13, and the Chaya Fellowship. The results 
of this paper appeared on arXiv as part of a preliminary version of
\cite{SWjems}.

\def\cprime{$'$} \def\cprime{$'$} \def\cprime{$'$}
\begin{bibdiv}
\begin{biblist}

\bib{SiegelFormula}{article}{
      author={Siegel, Carl~Ludwig},
       title={A mean value theorem in geometry of numbers},
        date={1945},
        ISSN={0003-486X},
     journal={Ann. of Math. (2)},
      volume={46},
       pages={340\ndash 347},
      review={\MR{0012093 (6,257b)}},
}

\bib{SWjems}{unpublished}{
      author={Shapira, Uri},
      author={Weiss, Barak},
       title={Stable lattices and the diagonal group},
        note={To appear in JEMS},
}

\bib{Thunder}{article}{
      author={Thunder, Jeffrey~Lin},
       title={Higher-dimensional analogs of hermite's constant},
        date={1998},
     journal={The Michigan Mathematical Journal},
      volume={45},
      number={2},
       pages={301\ndash 314},
}

\bib{Weil}{book}{
      author={Weil, Andr{\'e}},
       title={Adeles and algebraic groups},
      series={Progress in Mathematics},
   publisher={Birkh\"auser Boston},
     address={Mass.},
        date={1982},
      volume={23},
        ISBN={3-7643-3092-9},
        note={With appendices by M. Demazure and Takashi Ono},
      review={\MR{670072 (83m:10032)}},
}

\end{biblist}
\end{bibdiv}

\end{document}